\newtheorem{theorem}{Theorem}
\newtheorem{lemma}{Lemma}
\newtheorem{corollary}{Corollary}
\theoremstyle{definition}
\newtheorem{definition}{Definition}
\newtheorem{remark}{Remark}
\numberwithin{equation}{section}
\begin{document}
\title[On the Chebyshev polynomial and Second Hankel Determinant]{Second Hankel Determinant for certain class of bi-univalent functions defined by Chebyshev polynomials}
\author{H. Orhan, N. Magesh and V. K. Balaji}
\address{Department of Mathematics \\
Faculty of Science, Ataturk University \\
25240 Erzurum, Turkey.\\
\texttt{e-mail:} $orhanhalit607@gmail.com$}
\address{Post-Graduate and Research Department of Mathematics,\\
Government Arts College for Men,\\
Krishnagiri 635001, Tamilnadu, India\\
\texttt{e-mail:} $nmagi\_2000@yahoo.co.in$}
\address{ Department of Mathematics, L. N. Government College, \\
Ponneri, Chennai, Tamilnadu, \,India.\\
\texttt{e-mail:} $balajilsp@yahoo.co.in$}

\subjclass[2010]{Primary 30C45.}
\keywords{Analytic functions, Bi-univalent functions, Coefficient bounds,
Chebyshev polynomial, second Hankel determinant.}

\begin{abstract}

In this work, we obtain an  upper bound estimate for the second Hankel determinant of a subclass $\mathcal{N}_{\sigma }^{\mu
}\left( \lambda ,t\right) $  of analytic bi-univalent function class  $\sigma$
which is associated with Chebyshev polynomials in the open unit disk. 
\end{abstract}

\maketitle

\section{Introduction and definitions}
Let $\mathcal{A}$ be the class of functions $f$ of the form
\begin{equation}\label{Int-e1}
f(z) = z + \sum\limits_{n=2}^{\infty} a_{n} z^{n},
\end{equation}
which are analytic in the open unit disk $\mathbb{D}=\{z\in\mathbb{C}:\left\vert z\right\vert<1\}.$ We denote by $\mathcal{S}$ the subclass of $\mathcal{A}$ which consists of functions of the form \eqref{Int-e1}, that is, functions which are analytic and univalent in $\mathbb{D}$ and are 
normalized by the conditions $f(0)=0$ and $f'(0)=1$. The Koebe one-quarter
theorem ensures that the image of $\mathbb{D}$ under every
univalent function $f\in\mathcal{S}$ contains the disk with the center in
the origin and the radius $1/4$. Thus, every univalent function $f\in\mathcal{S}$
has an inverse $f^{-1}:f(\mathbb{D})\rightarrow\mathbb{D}$, satisfying
$f^{-1}(f(z))=z$, $z\in\mathbb{D}$, and
\[
f\left(f^{-1}(w)\right)=w \;\; (|w|<r_0(f),\;r_0(f)\geq\frac{1}{4}).
\]

Moreover, it is easy to see that the inverse function has the series expansion
of the form
\begin{equation}\label{expinv}
f^{-1}(w)=w-a_2w^2+\left(2a_2^2-a_3\right)w^3-\left(5a_2^3-5a_2a_3+a_4\right)w^4+\dots\;\; (w\in f(\mathbb{D})).
\end{equation}

A function $f \in \mathcal{A}$ is said to be bi-univalent in $\mathbb{D}$ if
both $f$ and $f^{-1}$ are univalent in $\mathbb{D}.$ Let $\sigma$
denote the class of bi-univalent functions in $\mathbb{D}$ given by (\ref%
{Int-e1}). For a further historical account of functions in the class $\sigma$,
see the work by Srivastava et al. \cite{HMS-AKM-PG}. In fact,
judging by the remarkable flood of papers on non-sharp estimates
on the first two coefficients $a_{2}$ and $a_{3}$ of various subclasses
of the bi-univalent function class $\sigma$ (see, for example,
\cite{Ali-Ravi-Ma-Mina-class,Altinkaya,SA-SY-2016a,SA-SY-2016b,Bulut,MC-ED-HMS:TJM-2017,Caglar-Orhan,Deniz,Deniz-SHD,
BAF-MKA,Jay-NM-JY,HO-NM-VKB,HMS-Caglar,HMS-DB:JEMS-2015,HMS-SSE-RMA:FILO-2015,HMS-SG-FG:AMS-2016,HMS-SG-FG:AM-2017,HMS-SBJ-SJ-HP:PJM-2016,HMS-GMS-NM-GJM,HMS-SSS-RS:TMJ-2014,HT-HMS-SSS-PG:JMI-2016,QHX-YCG-HMS:AML-2012,QHX-HGX-HMS:AMC-2012,Zaprawa,Zaprawa-AAA} and references therein), the above-cited recent pioneering work of Srivastava et al. \cite{HMS-AKM-PG} has apparently revived the study of analytic and bi-univalent functions in recent years.

For functions $f$ and $g,$ analytic in $\mathbb{D} ,$ we say that the
function $f$ is subordinate to $g$ in $\mathbb{D} ,$ and write $f\prec g,$ $z\in \mathbb{D},$
if there exists a Schwarz function $w,$ analytic in $\mathbb{D} ,$ with
$w(0)=0$ and $\left\vert w(z)\right\vert <1$ such that
$f(z)=g(w(z)),$ $z\in \mathbb{D}.$ In particular, if the function $g$ is univalent in $\mathbb{D} ,$ the above
subordination is equivalent to $f(0)=g(0)$ and $f(\mathbb{D} )\subset g(\mathbb{D} ).$

Some of the important and well-investigated subclasses of the univalent
function class $\mathcal{S}$ include (for example) the class $\mathcal{S}%
^{\ast }(\beta )$ of starlike functions of order $\beta $ in $\mathbb{D}$
and the class $\mathcal{K}(\beta )$ of convex functions of order $\beta $
in $\mathbb{D}.$ By definition, we have
\begin{equation*}
\mathcal{S}^{\ast }(\beta ):=\left\{ f:f\in \mathcal{A}\,\text{\ }\mathrm{%
and}\,\Re \left( \frac{zf^{\prime }(z)}{f(z)}\right) >\beta ;\,\,z\in
\mathbb{D};\,\,\,0\leq \beta <1\right\}
\end{equation*}%
and
\begin{equation*}
\mathcal{K}(\beta ):=\left\{ f:f\in \mathcal{A}\,\text{\ }\mathrm{and}\,\Re
\left( 1+\frac{zf^{\prime \prime }(z)}{f^{\prime }(z)}\right) >\beta
;\,\,z\in \mathbb{D};\,\,\,0\leq \beta <1\right\} .  \label{CV-e}
\end{equation*}

For $0\leq \beta <1,$ a function $f\in\sigma$ is in the class $%
\mathcal{S}^*_{\sigma}(\beta)$ of bi-starlike function of order $\beta,$ or $%
\mathcal{K}_{\sigma}(\beta)$ of bi-convex function of order $\beta$ if
both $f$ and $f^{-1}$ are respectively starlike or convex functions of order
$\beta.$


For integers $n \geq 1$ and $q \geq 1,$ the $q-$th Hankel determinant, defined as
\begin{equation*}\label{HD}
H_{q}(n)=\left|\begin{array}{cccc}
                 a_{n} & a_{n+1} & \cdots & a_{n+q-1} \\
                 a_{n+1} & a_{n+2} & \cdots & a_{n+q-2} \\
                 \vdots & \vdots & \vdots & \vdots \\
                 a_{n+q-1} & a_{n+q-2} & \cdots & a_{n+2q-2}
               \end{array}
\right| \qquad (a_1=1).
\end{equation*}
The properties of the Hankel determinants
can be found in \cite{Vein-Dale}. It is interesting to note that
\[
H_{2}(1) = \left|\begin{array}{cc}
                 a_1 & a_2 \\
                 a_2 & a_3
               \end{array}
            \right|
         = a_3- a^2_2 \qquad (a_1=1)
\quad \hbox{and} \quad 
H_{2}(2) = \left|\begin{array}{cc}
                 a_2 & a_3 \\
                 a_3 & a_4
               \end{array}
            \right|
         = a_2a_4- a^2_3 .
\]

The Hankel determinants $H_2(1) = a_3 - a_2^2$ and $H_2(2) = a_2a_4 - a_2^3$
are well-known as Fekete-Szeg\"{o}  and second Hankel determinant functionals
respectively. Further Fekete and Szeg\"{o} \cite{Fekete-Szego} introduced the 
generalized functional $a_3-\delta a_2^2,$ where $\delta$ is some real number. 
In 1969, Keogh and Merkes \cite{Keogh-Merkes} studied the Fekete-Szeg\"{o} 
problem for the classes $\mathcal{S}^*$ and $\mathcal{K}.$ In 2001, Srivastava et al. \cite{HMS-AKM-MKD:CVTP-2001} solved completely the Fekete-Szeg\"o problem for the family $\mathcal{C}_{1}:=\{f\in\mathcal{A}~:~ \Re \left(e^{i\eta}f^{\prime}(z)\right)>0,\; -\frac{\pi}{2} < \eta < \frac{\pi}{2},\; z \in \mathbb{D} \}$ and obtained improvement of $|a_3 - a_2^2|$ for the smaller set $\mathcal{C}_{1}.$  Recently, Kowalczyk et al. \cite{BK-AL-HMS:PIM-2017} discussed the developments involving the Fekete-Szeg\"o functional $|a_3-\delta a_2^2|,$ where $0\leq \delta \leq 1$   as well as the corresponding Hankel determinant for the Taylor-Maclaurin coefficients $\{a_{n}\}_{n\in \mathbb{N}\setminus \{1\}}$ of normalized univalent functions of the form \eqref{Int-e1}. Similarly, several authors
have investigated upper bounds for the Hankel determinant of functions belonging
to various subclasses of univalent functions \cite{NMA-RMA-VR-2017,SHD-Ali-2009,VKD-RRT-SHD-2014,SHD-Lee-2013,GMS-NM-SHD,Orhan-FHD-2010} and the references therein. On the other hand, Zaprawa \cite{%
Zaprawa,Zaprawa-AAA} extended the study on Fekete-Szeg\"{o} problem to some 
specific classes of bi-univalent functions. Following Zaprawa \cite{Zaprawa,Zaprawa-AAA}, 
the Fekete-Szeg\"{o} problem for functions belonging to various subclasses of 
bi-univalent functions were obtained in \cite{Altinkaya,Jay-NM-JY,HO-NM-VKB-Fekete,HT-HMS-SSS-PG:JMI-2016}. Very recently, the upper bounds of $H_2(2)$ for the classes $S^*_{\sigma}(\beta)$ and $K_{\sigma}(\beta)$ were discussed by Deniz et al. \cite{Deniz-SHD}. Later, the upper bounds of $H_2(2)$ for various subclasses of $\sigma$ were obtained by  Alt\i nkaya\ and Yal\c c\i n \cite{SA-SY-2016b,SA-SY-2016c}, \c{C}a\u glar et al. \cite{MC-ED-HMS:TJM-2017}, Kanas et al. \cite{SK-EAA-AZ:2017-mjom} and Orhan et al. \cite{HO-NM-JY-Hankel} (see also \cite{Mustafa,HO-ET-EK:Filo-2017}).

The significance of Chebyshev polynomial in numerical analysis is increased
in both theoretical and practical points of view. Out of four kinds of
Chebyshev polynomials, many researchers dealing with orthogonal polynomials
of Chebyshev. For a brief history of the Chebyshev polynomials of first kind $%
T_{n}(t),$ second kind $U_{n}(t)$ and their applications one can refer
\cite{EHD-1994, JD-RKR-JS-2015, JCM-1967, SA-SY-2016a}. The Chebyshev
polynomials of first and second kinds are well known and they are
defined by%
\begin{equation*}
T_{n}(t)=\cos n\theta \quad \text{and\quad }U_{n}(t)=\frac{\sin (n+1)\theta
}{\sin \theta }\qquad (-1<t<1)
\end{equation*}%
where $n$ denotes the polynomial degree and $t=\cos \theta .$

\begin{definition}
For $\lambda \geq 1,$ $\mu \geq 0$ and $t\in (1/2,1],$ a function $f\in
\sigma $ given by $\left( \ref{Int-e1}\right) $ is said to be in the class $%
\mathcal{N}_{\sigma }^{\mu }\left( \lambda ,t\right) $ if the following
subordinations hold for all $z,w\in \mathbb{D} :$%
\begin{equation}
(1-\lambda )\left( \frac{f(z)}{z}\right) ^{\mu }+\lambda f^{\prime
}(z)\left( \frac{f(z)}{z}\right) ^{\mu -1}\prec H(z,t):=\frac{1}{1-2tz+z^{2}}
\label{Cheby-class-equ-f}
\end{equation}%
and
\begin{equation}
(1-\lambda )\left( \frac{g(w)}{w}\right) ^{\mu }+\lambda g^{\prime
}(w)\left( \frac{g(w)}{w}\right) ^{\mu -1}\prec H(w,t):=\frac{1}{1-2tw+w^{2}}%
,  \label{Cheby-class-equ-g}
\end{equation}%
where the function $g=f^{-1}$ is defined by \eqref{expinv}.
\end{definition}

We note that if $t=\cos \alpha ,$ where $\alpha \in (-\pi /3,\pi /3),$ then
\begin{equation*}
H(z,t)=\frac{1}{1-2\cos \alpha z+z^{2}}=1+\sum\limits_{n=1}^{\infty }\frac{%
\sin (n+1)\alpha }{\sin \alpha }z^{n}\quad (z\in \mathbb{D} ).
\end{equation*}%
Thus
\begin{equation*}
H(z,t)=1+2\cos \alpha z+(3\cos ^{2}\alpha -\sin ^{2}\alpha )z^{2}+\dots
\quad (z\in \mathbb{D} ).
\end{equation*}%
It also can be write
\begin{equation}
H(z,t)=1+U_{1}(t)z+U_{2}(t)z^{2}+\dots \quad (z\in \mathbb{D} ,\quad t\in (-1,1))
\label{CP-SHD-H}
\end{equation}%
where%
\begin{equation*}
U_{n-1}=\frac{\sin (n\, arc \cos t)}{\sqrt{1-t^{2}}}\quad (n\in \mathbb{N})
\end{equation*}%
are the Chebyshev polynomials of the second kind and we have
\begin{equation*}
U_{n}(t)=2tU_{n-1}(t)-U_{n-2}(t),
\end{equation*}%
and
\begin{equation}
U_{1}(t)=2t,\quad U_{2}(t)=4t^{2}-1,\quad U_{3}(t)=8t^{3}-4t,\quad
U_{4}(t)=16t^{4}-12t^{2}+1,\, \dots .  \label{Cheby-Coeffs}
\end{equation}

The generating function of the first kind of Chebyshev polynomial $T_{n}(t),$
$t\in [-1,\, 1]$ is given by
\begin{equation*}
\sum\limits_{n=0}^{\infty }T_{n}(t)z^{n}=\frac{1-tz}{1-2tz+z^{2}}\qquad
(z\in \mathbb{D}).
\end{equation*}

The first kind of Chebyshev polynomial $T_{n}(t)$ and second kind of
Chebyshev polynomial $U_{n}(t)$ are connected by:
\begin{equation*}
\frac{dT_{n}(t)}{dt}=nU_{n-1}(t);\quad T_{n}(t)=U_{n}(t)-tU_{n-1}(t);\quad
2T_{n}(t)=U_{n}(t)-U_{n-2}(t).
\end{equation*}
The class $\mathcal{N}_{\sigma }^{\mu }\left( \lambda ,t\right)$ was introduced and studied by Bulut et al. \cite{BMA}. Also, they discussed initial coefficient estimates and Fekete-Szeg\"{o} bounds for the class $\mathcal{N}_{\sigma }^{\mu }\left( \lambda ,t\right)$ and it's subclasses given in the following remark.
\begin{remark}
For $\mu =1,$ we get the class $\mathcal{N}_{\sigma }^{1}\left(
\lambda ,t\right) =\mathcal{B}_{\sigma }\left( \lambda ,t\right) $ consists
of functions $f\in \sigma $ satisfying the condition%
\begin{equation*}
\left( 1-\lambda \right) \frac{f\left( z\right) }{z}+\lambda f^{\prime
}\left( z\right) \prec H(z,t)=\frac{1}{1-2tz+z^{2}} \qquad
(z\in \mathbb{D})
\end{equation*}%
and%
\begin{equation*}
\left( 1-\lambda \right) \frac{g\left( w\right) }{w}+\lambda g^{\prime
}\left( w\right) \prec H(w,t)=\frac{1}{1-2tw+w^{2}} \qquad
(w\in \mathbb{D})
\end{equation*}%
where the function $g=f^{-1}$ is defined by $\left( \ref{expinv}\right).$
This class was introduced and studied by Bulut et al. \cite{BMB} (see also \cite{Mustafa}).
\end{remark}
\begin{remark}
For $\lambda =1,$ we have a class $\mathcal{N}_{\sigma }^{\mu }\left(
1,t\right) =\mathcal{B}_{\sigma }^{\mu }\left( t\right) $ consists of
bi-Bazilevi\v{c} functions:%
\begin{equation*}
f^{\prime }\left( z\right) \left( \frac{f\left( z\right) }{z}\right) ^{\mu
-1}\prec H(z,t)=\frac{1}{1-2tz+z^{2}} \qquad
(z\in \mathbb{D})
\end{equation*}%
and%
\begin{equation*}
g^{\prime }\left( w\right) \left( \frac{g\left( w\right) }{w}\right) ^{\mu
-1}\prec H(w,t)=\frac{1}{1-2tw+w^{2}} \qquad
(w\in \mathbb{D})
\end{equation*}%
where the function $g=f^{-1}$ is defined by $\left( \ref{expinv}\right)
.$
\end{remark}
\begin{remark}
For $\lambda =1$ and $\mu =1,$ we have the class $%
\mathcal{N}_{\sigma }^{1}\left( 1,t\right) =\mathcal{B}_{\sigma }\left(
t\right) $ consists of functions $f$ satisfying the condition%
\begin{equation*}
f^{\prime }\left( z\right) \prec H(z,t)=\frac{1}{1-2tz+z^{2}} \qquad
(z\in \mathbb{D})
\end{equation*}%
and%
\begin{equation*}
g^{\prime }\left( w\right) \prec H(w,t)=\frac{1}{1-2tw+w^{2}} \qquad
(w\in \mathbb{D})
\end{equation*}%
where the function $g=f^{-1}$ is defined by $\left( \ref{expinv}\right)
.$
\end{remark}
\begin{remark} For $\lambda =1$ and $\mu =0,$ we have the class $%
\mathcal{N}_{\sigma }^{0}\left( 1,t\right) =\mathcal{S}_{\sigma }^{\ast
}\left( t\right) $ consists of functions $f$ satisfying the condition%
\begin{equation*}
\frac{zf^{\prime }(z)}{f(z)}\prec H(z,t)=\frac{1}{1-2tz+z^{2}} \qquad
(z\in \mathbb{D})
\end{equation*}%
and%
\begin{equation*}
\frac{wg^{\prime }\left( w\right) }{g\left( w\right) }\prec H(w,t)=\frac{1}{%
1-2tw+w^{2}} \qquad
(w\in \mathbb{D})
\end{equation*}%
where the function $g=f^{-1}$ is defined by $\left( \ref{expinv}\right).$
\end{remark}
Next we state the following lemmas we shall use to establish the desired bounds in our study.
\par Let $\mathcal{P}$ denote the class of functions $p(z)$ of the form
\begin{equation}\label{Repart-fun-p}
p(z)= 1+ c_{1} z + c_{2} z^{2} + c_{3} z^{3} + \cdots,
\end{equation}
which are analytic in the open unit disc $\mathbb{D}.$
\begin{lemma}\cite{Pom}\label{L-Repart-fun-p}
If the function $p\in \mathcal{P}$ is given by the series (\ref{Repart-fun-p}), then the following sharp estimate holds:
\begin{equation}\label{cnbound}
|c_{k}|\leq 2, \qquad k = 1,\, 2,\, \cdots.
\end{equation}
\end{lemma}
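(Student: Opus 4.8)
The plan is to obtain the bound from the classical integral representation of functions with positive real part. Since $p\in\mathcal{P}$ has $\Re p(z)>0$ on $\mathbb{D}$ and is normalized by $p(0)=1$, the Herglotz (Riesz) representation theorem provides a probability measure $\mu$ on the unit circle with
\begin{equation*}
p(z)=\int_{|x|=1}\frac{1+xz}{1-xz}\,d\mu(x),\qquad z\in\mathbb{D}.
\end{equation*}
Expanding the kernel in a geometric series, $\frac{1+xz}{1-xz}=1+2\sum_{k\ge1}x^{k}z^{k}$, and comparing with \eqref{Repart-fun-p} gives $c_{k}=2\int_{|x|=1}x^{k}\,d\mu(x)$ for every $k\ge1$. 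Since $|x|=1$ on the circle and $\mu$ has total mass $1$, it follows at once that $|c_{k}|\le 2\int_{|x|=1}d\mu(x)=2$.

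To keep the argument self-contained I would instead write out the equivalent Fourier-coefficient computation on the circle $|z|=r$, $0<r<1$. For $k\ge1$ one has $c_{k}r^{k}=\frac{1}{2\pi}\int_{0}^{2\pi}p(re^{i\theta})e^{-ik\theta}\,d\theta$, while $\frac{1}{2\pi}\int_{0}^{2\pi}\overline{p(re^{i\theta})}\,e^{-ik\theta}\,d\theta=0$ because the Fourier expansion of $\overline{p(re^{i\theta})}$ contains only frequencies $\le 0$, so its coefficient at frequency $k$ vanishes for every $k\ge1$. Adding these identities and using $p+\bar p=2\Re p$ yields
\begin{equation*}
c_{k}r^{k}=\frac{1}{\pi}\int_{0}^{2\pi}\bigl(\Re p(re^{i\theta})\bigr)e^{-ik\theta}\,d\theta .
\end{equation*}
Since $\Re p>0$, the modulus of the right-hand side is at most $\frac{1}{\pi}\int_{0}^{2\pi}\Re p(re^{i\theta})\,d\theta$, which equals $2\,\Re p(0)=2$ by the mean value property of the harmonic function $\Re p$. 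Hence $|c_{k}|\,r^{k}\le 2$ for all $r\in(0,1)$, and letting $r\to1^{-}$ gives $|c_{k}|\le2$. Sharpness is immediate: the M\"obius function $p_{0}(z)=(1+z)/(1-z)\in\mathcal{P}$ has $c_{k}=2$ for every $k$.

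The only point requiring any care is the passage to the unit circle. In the representation-theoretic version it is concealed inside the proof of the Herglotz theorem --- a normal-families / weak-$\ast$ compactness argument applied to the positive measures $\tfrac{1}{2\pi}\Re p(re^{i\theta})\,d\theta$ as $r\to1^{-}$ --- while in the direct version it is the harmless limit $r\to1^{-}$ at the very end. Everything else reduces to manipulating the power series \eqref{Repart-fun-p} and invoking the mean value property, so no substantive obstacle is expected; the statement is, after all, the classical Carath\'{e}odory--Toeplitz coefficient bound recorded in \cite{Pom}.
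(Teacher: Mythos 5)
Your proof is correct. The paper itself gives no argument for this lemma --- it is quoted verbatim from Pommerenke's book \cite{Pom} --- and what you have written is precisely the classical Carath\'eodory proof recorded there: either the Herglotz representation with a probability measure on the circle, or the equivalent direct estimate $|c_{k}|r^{k}\le\frac{1}{\pi}\int_{0}^{2\pi}\Re p(re^{i\theta})\,d\theta=2\Re p(0)=2$ followed by $r\to1^{-}$, with sharpness witnessed by $(1+z)/(1-z)$. Both of your variants are complete and accurate, so there is no gap to report; the only remark is that for the purposes of this paper the lemma is simply cited, so no new proof was required.
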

\begin{lemma}\label{L-C2-C3} \cite{Grender}
If the function $p\in \mathcal{P}$ is given by the series (\ref{Repart-fun-p}), then
\begin{eqnarray*}
2c_2 &=& c_1^2 + x (4-c_1^2)\,\, \label{L2-c2}\\
4c_3 &=& c_1^3+2c_1(4-c_1^2)x-c_1(4-c_1^2)x^2+2(4-c_1^2)(1-|x|^2)z \,\, \label{L2-c3}
\end{eqnarray*}
for some $x,$ $z$ with $|x| \leq 1$ and $|z| \leq 1.$
\end{lemma}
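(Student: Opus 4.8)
The plan is to recover both identities from the classical correspondence between the Carath\'eodory class $\mathcal{P}$ and Schwarz functions, combined with a two-step Schur algorithm (iterated Schwarz lemma); the relations are well known, but here is how I would derive them. First I would observe that, for $p\in\mathcal{P}$ written as in \eqref{Repart-fun-p}, the M\"obius transform $w(z)=\dfrac{p(z)-1}{p(z)+1}$ is analytic in $\mathbb{D}$, satisfies $w(0)=0$, and obeys $|w(z)|<1$ there (equivalently $\Re p>0$), so $w$ is a Schwarz function, while conversely $p=\dfrac{1+w}{1-w}$. Expanding $p=(1+w)(1+w+w^{2}+\cdots)$ with $w(z)=w_{1}z+w_{2}z^{2}+w_{3}z^{3}+\cdots$ yields $c_{1}=2w_{1}$, $c_{2}=2(w_{2}+w_{1}^{2})$ and $c_{3}=2(w_{3}+2w_{1}w_{2}+w_{1}^{3})$. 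Since $|w_{1}|\le1$, and since a rotation $p(z)\mapsto p(e^{i\theta}z)$ preserves $\mathcal{P}$ while only multiplying the $c_{k}$ by unimodular factors, I may assume $c_{1}\in[0,2]$; then $w_{1}=c_{1}/2$ is real and $1-|w_{1}|^{2}=(4-c_{1}^{2})/4$.

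The second step is to parametrize $w_{2}$ and $w_{3}$. By the Schwarz lemma $\phi(z):=w(z)/z$ maps $\mathbb{D}$ into $\overline{\mathbb{D}}$, with $\phi(0)=w_{1}$, $\phi'(0)=w_{2}$ and $\tfrac12\phi''(0)=w_{3}$. By the Schwarz--Pick lemma the map $\psi(z):=\dfrac{\phi(z)-\phi(0)}{1-\overline{\phi(0)}\,\phi(z)}$ is again a Schwarz function, so $\psi(z)=z\eta(z)$ with $\|\eta\|_{\infty}\le1$; put $x:=\eta(0)$, so $|x|\le1$. Clearing denominators, $\phi(z)-\phi(0)=z\eta(z)\bigl(1-\overline{\phi(0)}\phi(z)\bigr)$; comparing the coefficient of $z$ gives $w_{2}=(1-|w_{1}|^{2})x$, and comparing the coefficient of $z^{2}$ gives $w_{3}=(1-|w_{1}|^{2})\bigl[(1-|x|^{2})\zeta-\overline{w_{1}}\,x^{2}\bigr]$, where $\eta'(0)=(1-|x|^{2})\zeta$ with $|\zeta|\le1$ comes from one further Schwarz--Pick step applied to $\eta$.

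Finally I would substitute $w_{1}=c_{1}/2$, $w_{2}=\tfrac14(4-c_{1}^{2})x$ and $w_{3}=\tfrac14(4-c_{1}^{2})\bigl[(1-|x|^{2})\zeta-\tfrac{c_{1}}{2}x^{2}\bigr]$ into $c_{2}=2(w_{2}+w_{1}^{2})$ and $c_{3}=2(w_{3}+2w_{1}w_{2}+w_{1}^{3})$ and simplify; renaming $\zeta$ as $z$, this produces exactly $2c_{2}=c_{1}^{2}+(4-c_{1}^{2})x$ and $4c_{3}=c_{1}^{3}+2c_{1}(4-c_{1}^{2})x-c_{1}(4-c_{1}^{2})x^{2}+2(4-c_{1}^{2})(1-|x|^{2})z$. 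The one place that needs care is the bookkeeping in the two Schur--Schwarz steps: identifying correctly which auxiliary maps are contractions of $\mathbb{D}$ and tracking the factors $1-|w_{1}|^{2}$ and $1-|x|^{2}$ through the coefficient comparisons; everything else is routine algebra. (An equally classical alternative bypasses the iteration and reads the two relations directly from the Carath\'eodory--Toeplitz positivity conditions for the Toeplitz forms built from $1,c_{1},c_{2},c_{3}$.)
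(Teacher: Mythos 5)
Your argument is correct, but note that the paper never proves this lemma at all: it is quoted verbatim from Grenander and Szeg\"{o} \cite{Grender}, and in the literature the $c_3$ parametrization is usually credited to Libera and Zlotkiewicz, who obtained it from the Carath\'eodory--Toeplitz positivity conditions recorded in \cite{Grender}. Your alternative derivation through the Schwarz function $w=(p-1)/(p+1)$ and two Schur--Pick steps is a legitimate, self-contained route, and the bookkeeping checks out: $c_1=2w_1$, $c_2=2(w_2+w_1^2)$, $c_3=2(w_3+2w_1w_2+w_1^3)$, then $w_2=(1-|w_1|^2)x$ and $w_3=(1-|w_1|^2)\bigl[(1-|x|^2)\zeta-\overline{w_1}\,x^2\bigr]$ from the two contraction steps, and these reassemble exactly into the two displayed identities once $w_1=c_1/2$ is real. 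Two caveats are worth recording. First, your rotation step means you actually prove the identities only after normalizing $c_1=c\in[0,2]$; for genuinely complex $c_1$ the exact relations read $2c_2=c_1^2+x\,(4-|c_1|^2)$ and the $c_3$ identity carries $\overline{c_1}$, so the lemma as printed is literally valid only under this normalization --- which is harmless here, since the paper itself invokes it precisely in that normalized form (``we may assume without any restriction that $c\in[0,2]$''). Second, the degenerate case $|w_1|=1$ (that is, $c=2$), where your quotient $\psi=(\phi-\phi(0))/(1-\overline{\phi(0)}\,\phi)$ is identically $0/0$, should be dispatched separately: there $\phi$ is a unimodular constant, $4-c_1^2=0$, and both identities hold trivially for any admissible $x$, $z$. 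With these two remarks added your proof is complete, and it offers something the citation does not, namely an elementary, verifiable derivation in place of the Toeplitz-form machinery.
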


In this present paper, we consider a subclass $\mathcal{N}_{\sigma }^{\mu
}\left( \lambda ,t\right) $ of analytic and bi-univalent functions using the
Chebyshev polynomials expansions and find the second Hankel determinant estimates.
Further we discuss its consequences.

\section{Main results}

\begin{theorem}
\label{th-SHD-class} Let $f\in \sigma$ of the form (\ref{Int-e1}) be in $\mathcal{N}_{\sigma }^{\mu }\left( \lambda; \,t\right).$ Then
\[
|a_2a_4-a_3^2| \leq
        \left\{
             \begin{array}{ll}
                  K(2^-, ~t) &; M_{1}\geq0 ~\hbox{and}~ M_{2}\geq0 \\
                  max\left\{\frac{4t^2}{(2\lambda+\mu)^2}, ~K(2^-, ~t)\right\}
                            &; M_{1}>0 ~\hbox{and}~ M_{2}<0  \\
                  \frac{4t^2}{(2\lambda+\mu)^2}
                            &; M_{1}\leq0 ~\hbox{and}~ M_{2}\leq0\\
                  max \left\{K(c_0, ~t),~K(2^-, ~t)\right\}
                            &; M_{1}<0 ~\hbox{and} ~ M_{2}>0
              \end{array}
         \right.,
\]
where
\begin{eqnarray*}
    K(2^-, ~t) & =& \frac{4t^2}{(2\lambda+\mu)^2}
                   +\frac{M_{1}+3M_{2}}{6(\lambda+\mu)^4(2\lambda+\mu)^2(3\lambda+\mu)},
                   \label{CP-SHD-e2a}\\
    K(c_0, ~t) & =& \frac{4t^2}{(2\lambda+\mu)^2}
                   -\frac{3M_{2}^2}{8M_{1}(\lambda+\mu)^4(2\lambda+\mu)^2(3\lambda+\mu)},\qquad c_0 = \sqrt{\frac{-6M_{2}}{M_{1}}}
                   \label{CP-SHD-e2b}
\end{eqnarray*}
and
\begin{eqnarray*}
M_{1}:=M_{1}(\lambda,\,\mu;\,t)
& = & 16t^2 \left|3(2t^{2}-1)(\lambda+\mu)^3-(\mu^2+3\mu+2)(3\lambda+\mu)t^2\right|(2\lambda+\mu)^2
\nonumber\\ && -24t\left[t^2(3\lambda+\mu)+(4t^2-1)(\lambda+\mu)(2\lambda+\mu)\right](\lambda+\mu)^2(2\lambda+\mu)
\nonumber\\ && -24t^2\lambda^2(\lambda+\mu)^3
,
\label{CP-SHD-e3}\\
M_{2}:=M_{2}(\lambda,\,\mu;\,t) &=& 8t\left[t^2(2\lambda+\mu)(3\lambda+\mu)
+(4t^2-1)(\lambda+\mu)(2\lambda+\mu)^2
\right. \nonumber\\ && \left.\qquad +t(2\lambda+\mu)^2(\lambda+\mu) - 2t(\lambda+\mu)^2(3\lambda+\mu)\right](\lambda+\mu)^2.
                            \label{CP-SHD-e4}
\end{eqnarray*}
\end{theorem}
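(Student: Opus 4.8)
The plan is to convert both subordination conditions into power–series identities, read off $a_2,a_3,a_4$ in terms of Carath\'eodory coefficients, and then optimise the resulting expression for $a_2a_4-a_3^2$.

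First I would record the membership $f\in\mathcal{N}_\sigma^\mu(\lambda,t)$ in the equivalent form
\[
(1-\lambda)\left(\tfrac{f(z)}{z}\right)^\mu+\lambda f'(z)\left(\tfrac{f(z)}{z}\right)^{\mu-1}=H(u(z),t),\qquad
(1-\lambda)\left(\tfrac{g(w)}{w}\right)^\mu+\lambda g'(w)\left(\tfrac{g(w)}{w}\right)^{\mu-1}=H(v(w),t),
\]
with $g=f^{-1}$ and $u,v$ Schwarz functions, and then write $u=\frac{p-1}{p+1}$, $v=\frac{q-1}{q+1}$ with $p(z)=1+c_1z+c_2z^2+\cdots$ and $q(w)=1+d_1w+d_2w^2+\cdots$ in $\mathcal{P}$. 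Expanding the left-hand sides through order three (using $(f(z)/z)^\mu=1+\mu a_2z+(\mu a_3+\binom\mu2 a_2^2)z^2+\cdots$ and the analogous expansion of $f'(z)(f(z)/z)^{\mu-1}$), expanding the right-hand sides by \eqref{CP-SHD-H}--\eqref{Cheby-Coeffs}, and inserting the inverse-coefficient formulas $-a_2,\ 2a_2^2-a_3,\ -(5a_2^3-5a_2a_3+a_4)$ from \eqref{expinv}, I obtain a solvable linear system. Matching the $z^1$ and $w^1$ terms yields $(\lambda+\mu)a_2=tc_1=-td_1$, hence $d_1=-c_1$; adding and subtracting the $z^2,w^2$ equations expresses $a_2^2$ and $a_3$ through $c_1$ and $c_2-d_2$; and combining the $z^3,w^3$ equations expresses $a_4$ through $c_1,c_2,c_3,d_2,d_3$. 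The factors $(\lambda+\mu)$, $(2\lambda+\mu)$, $(3\lambda+\mu)$ are precisely the coefficients of $a_2$, $a_3$, $a_4$ in this system, which is why they occur in the denominators of $K(2^-,t)$ and $K(c_0,t)$.

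Next I would substitute these formulas into $a_2a_4-a_3^2$. A rotation of $f$ lets me assume $c:=c_1\in[0,2]$ (and $|d_1|=c$). Applying Lemma \ref{L-C2-C3} to write $2c_2=c^2+(4-c^2)x$, $2d_2=c^2+(4-c^2)y$, together with the companion formulas for $4c_3$ and $4d_3$ carrying two further parameters of modulus $\le 1$, and then using the triangle inequality together with $|c_k|\le 2$ from Lemma \ref{L-Repart-fun-p}, I bound $|a_2a_4-a_3^2|$ by a function $\Psi(c,|x|,|y|)$ which is, in each of $|x|$ and $|y|$, non-decreasing (or an upward parabola); hence the bound is maximised at $|x|=|y|=1$, leaving a single polynomial $G(c)$ on $[0,2]$ of the shape $G(c)=\frac{4t^2}{(2\lambda+\mu)^2}+\frac{1}{D}\left(\frac{3M_2}{4}c^2+\frac{M_1}{16}c^4\right)$ with $D=6(\lambda+\mu)^4(2\lambda+\mu)^2(3\lambda+\mu)>0$. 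Its three competing extremal values are $G(0)=\frac{4t^2}{(2\lambda+\mu)^2}$ (here $a_2=0$, so $a_2a_4-a_3^2=-a_3^2$), the endpoint value $G(2^-)=K(2^-,t)$, and the interior critical value $K(c_0,t)$ at $c_0=\sqrt{-6M_2/M_1}$, where $G'(c)=\frac{c\,(M_1c^2+6M_2)}{4D}$. The sign of $G'$ is therefore governed by the signs of $M_1$ and $M_2$: both non-negative forces the maximum at $c\to 2$; both non-positive forces it at $c=0$; $M_1>0,\ M_2<0$ makes $c_0$ a local minimum so the maximum is an endpoint $\max\{\frac{4t^2}{(2\lambda+\mu)^2},K(2^-,t)\}$; and $M_1<0,\ M_2>0$ makes $c_0$ a local maximum, giving $\max\{K(c_0,t),K(2^-,t)\}$. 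This reproduces the four cases in the statement.

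I expect the main obstacle to be purely computational: expanding $(f(z)/z)^\mu$ and $f'(z)(f(z)/z)^{\mu-1}$ to third order for general $\mu$ (the $\binom\mu2$ and $\binom\mu3$ terms), solving cleanly for $a_4$, and then carrying the sizeable polynomial $a_2a_4-a_3^2$ through the substitution of Lemma \ref{L-C2-C3} without losing track of the coefficients that become $M_1$ and $M_2$. A smaller but genuine point is to verify rigorously that $\Psi$ is convex (upward) in $|x|$ and in $|y|$, so that the maximum really sits at $|x|=|y|=1$, and to confirm the endpoint-versus-critical-point comparisons that produce the two $\max\{\cdot,\cdot\}$ entries.
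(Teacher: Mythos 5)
Your proposal is correct and follows essentially the same route as the paper: the subordinations are converted to Carath\'eodory--coefficient equations giving $c_1=-d_1$ and expressions for $a_2,a_3,a_4$, then Lemma \ref{L-C2-C3} plus the triangle inequality reduces $|a_2a_4-a_3^2|$ to a function of $(c,|x|,|y|)$ maximised at $|x|=|y|=1$, leaving exactly the one-variable function $K(c,t)=\frac{4t^2}{(2\lambda+\mu)^2}+\frac{M_1c^4+12M_2c^2}{96(\lambda+\mu)^4(2\lambda+\mu)^2(3\lambda+\mu)}$ whose derivative sign analysis in the four $(M_1,M_2)$ sign cases yields the stated bounds. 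The only cosmetic difference is that you justify the inner maximisation by monotonicity/convexity in $|x|,|y|$, while the paper rules out interior critical points of $F(\gamma_1,\gamma_2)$ and then checks the boundary; both arrive at $F(1,1)$.
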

\begin{proof}
Let $f\in \mathcal{N}_{\sigma }^{\mu }\left( \lambda; \,t\right).$ Then
\begin{equation}  \label{SHD-th-p-e1}
(1-\lambda)\left(\frac{f(z)}{z}\right)^{\mu}+\lambda f^{\prime }(z)\left(%
\frac{f(z)}{z}\right)^{\mu-1} = H(t, ~w(z)) \qquad (z \in  \mathbb{D})
\end{equation}
and
\begin{equation}  \label{SHD-th-p-e2}
(1-\lambda)\left(\frac{g(w)}{w}\right)^{\mu}+\lambda g^{\prime }(w)\left(%
\frac{g(w)}{w}\right)^{\mu-1} =  H(t, ~\widetilde{w}(w)) \qquad (w \in  \mathbb{D})
\end{equation}
where $p, ~q \in \mathcal{P}$ and defined by
\begin{equation}
p(z)=\frac{1+w(z)}{1-w(z)}=1+c_1z+c_2z^2+c_3z^3+\dots = 1 + \sum_{n=1}^{\infty}c_n z^n
\label{SHD-th-p-e3}
\end{equation}
and
\begin{equation}
q(w)=\frac{1 + \widetilde{w}(w)}{1-\widetilde{w}(w)} =1+d_1w+d_2w^2+d_3w^3+\dots = 1 + \sum_{n=1}^{\infty}d_n w^n.
\label{SHD-th-p-e4}
\end{equation}
It follows from (\ref{SHD-th-p-e3}) and (\ref{SHD-th-p-e4}) that
\begin{eqnarray}
w(z) & =& \frac{p(z)-1}{p(z)+1}
       = \frac{1}{2}\left[c_1z+\left(c_2-\frac{c_1^2}{2}\right)z^2+\left(c_3-c_1c_2+\frac{c_{1}^{3}}{4}\right)z^3+\ldots\right]
                            \label{CP-SHD-e7a}
\end{eqnarray}
and
\begin{eqnarray}
\widetilde{w}(w) & =& \frac{q(w)-1}{q(w)+1}
                  = \frac{1}{2}\left[d_1w+\left(d_2-\frac{d_{1}^{2}}{2}\right)w^2+\left(d_3-d_1d_2+\frac{d_{1}^{3}}{4}\right)w^3+\ldots\right].
                            \label{CP-SHD-e7b}
\end{eqnarray}
\noindent From \eqref{CP-SHD-e7a} and \eqref{CP-SHD-e7b}, taking $H(z,t)$ as given in \eqref{CP-SHD-H}, we can show that,
\begin{eqnarray}
H(t, ~w(z)) & =& 1+\frac{U_1(t)}{2}c_{1}z
                    +\left[\frac{U_1(t)}{2}\left(c_2-\frac{c_{1}^{2}}{2}\right)+\frac{U_2(t)}{4}c_{1}^{2}\right]z^2\\
                  && \qquad
                    +\left[\frac{U_1(t)}{2}\left(c_3-c_1c_2+\frac{c_{1}^{3}}{4}\right)
                    +\frac{U_2(t)}{2}c_1\left(c_2-\frac{c_{1}^{2}}{2}\right)
                    +\frac{U_3(t)}{8}c_{1}^{3}\right]z^3 + \ldots\nonumber
\label{CP-SHD-e8}
\end{eqnarray}
and
\begin{eqnarray}
    H(t, ~\widetilde{w}(w)) &=& 1+\frac{U_1(t)}{2}d_1w
                              +\left[\frac{U_1(t)}{2}\left(d_2-\frac{d_{1}^{2}}{2}\right)+\frac{U_2(t)}{4}d_{1}^{2}\right]w^2\\
                          &&  \,\,
                              +\left[\frac{U_1(t)}{2}\left(d_3-d_1d_2+\frac{d_{1}^{3}}{4}\right)
                              +\frac{U_2(t)}{2}d_1\left(d_2-\frac{d_{1}^{2}}{2}\right)
                              +\frac{U_3(t)}{8}d_{1}^{3}\right]w^3+\cdots\nonumber
\label{CP-SHD-e9}
\end{eqnarray}
It follows from (\ref{SHD-th-p-e1}), \eqref{CP-SHD-e8} and (\ref{SHD-th-p-e2}) , \eqref{CP-SHD-e9}, we obtain that
\begin{eqnarray}
(\lambda+\mu)a_2 &=&  \frac{U_1(t)}{2}c_1 \label{SHD-th-p-e5}\\
(2\lambda+\mu)\left[a_3+\frac{a_2^2}{2}(\mu-1)\right]&=& \frac{U_{1}(t)}{2}\left(c_2-\frac{c_{1}^{2}}{2}\right)+\frac{U_2(t)}{4}c_{1}^{2} \label{SHD-th-p-e6} \\
(3\lambda+\mu)\left[a_4+(\mu-1)a_2a_3+(\mu-1)(\mu-2)\frac{a_2^3}{6}\right]&=&  \frac{U_1(t)}{2}\left(c_3-c_1c_2+\frac{c_{1}^{3}}{4}\right)\label{SHD-th-p-e7}\\
                   &&\, +\frac{U_2(t)}{2}c_1\left(c_2-\frac{c_{1}^{2}}{2}\right)+\frac{U_3(t)}{8}c_{1}^{3}\nonumber
\end{eqnarray}
and
\begin{eqnarray}
-(\lambda+\mu)a_2 &=&  \frac{U_1(t)}{2}d_1 \label{SHD-th-p-e8}\\
(2\lambda+\mu)\left[(\mu+3)\frac{a_2^2}{2}-a_3\right]&=& \frac{U_{1}(t)}{2}\left(d_2-\frac{d_{1}^{2}}{2}\right)+\frac{U_2(t)}{4}d_{1}^{2} \label{SHD-th-p-e9}\\
(3\lambda+\mu)\left[(4+\mu)a_2a_3-(4+\mu)(5+\mu)\frac{a_2^3}{6}-a_4\right]&=& \frac{U_1(t)}{2}\left(d_3-d_1d_2+\frac{d_{1}^{3}}{4}\right)\label{SHD-th-p-e10}\\
                     &&\,+\frac{U_2(t)}{2}d_1\left(d_2-\frac{d_{1}^{2}}{2}\right)+\frac{U_3(t)}{8}d_{1}^{3}.\nonumber
\end{eqnarray}
From (\ref{SHD-th-p-e5}) and (\ref{SHD-th-p-e8}), we find that
\begin{equation}  \label{SHD-a2}
\frac{U_1(t)}{2(\lambda+\mu)}c_1 = -\frac{U_1(t)}{2(\lambda+\mu)}d_1
\end{equation}
and
\begin{equation}  \label{SHD-th-p-e11}
c_1 =-d_1.
\end{equation}
Next, subtracting (\ref{SHD-th-p-e9}) from (\ref{SHD-th-p-e6}) and using (\ref{SHD-a2}), we arrive at
\begin{equation}  \label{SHD-a3}
    a_3=a_{2}^{2}+\frac{U_1(t)}{4(2\lambda+\mu)}(c_2-d_2)=\frac{U_{1}^{2}(t)}{4(\lambda+\mu)^2} c_{1}^{2}+\frac{U_1(t)}{4(2\lambda+\mu)}(c_2-d_2).
\end{equation}
On the other hand, subtracting \eqref{SHD-th-p-e10} from \eqref{SHD-th-p-e7} and
considering \eqref{SHD-a2} and \eqref{SHD-a3} we get
\begin{eqnarray}\label{SHD-a4}
a_4 &=& \frac{5U_{1}^{2}(t)c_1(c_2-d_2)}{16(\lambda+\mu)(2\lambda+\mu)}
                +\frac{U_{1}(t)(c_3-d_3)}{4(3\lambda+\mu)}
                +\frac{(U_2(t)-U_1(t))}{4(3\lambda+\mu)}c_1(c_2+d_2)\nonumber\\
    && \qquad +\left[\frac{U_1(t)-2U_2(t)+U_3(t)}{8(3\lambda+\mu)}
                -\frac{(\mu^2+3\mu-4)U_{1}^{3}(t)}{48(\lambda+\mu)^3}\right]c_{1}^{3}.
\end{eqnarray}
Thus from \eqref{SHD-a2}, \eqref{SHD-a3} and \eqref{SHD-a4} we can easily
establish that,
\begin{eqnarray}\label{a2a4-a3}
    a_2a_4-a_3^2 & =& \frac{U_1^3(t)c_1^2(c_2-d_2)}{32(\lambda+\mu)^2(2\lambda+\mu)}
                        +\frac{U_1^2(t)c_1(c_3-d_3)}{8(\lambda+\mu)(3\lambda+\mu)}\nonumber\\
                 && \,\, +\frac{\left[U_2(t)-U_1(t)\right]U_1(t)}{8(\lambda+\mu)(3\lambda+\mu)}c_1^2(c_2+d_2)
                        -\frac{U_1^2(t)(c_2-d_2)^2}{16(2\lambda+\mu)^2}\\
                 && \,\, +\frac{U_1(t)c_1^4\left[6(U_1(t)-2U_2(t)+U_3(t))
                            (\lambda+\mu)^3-U_1^3(t)(\mu^2+3\mu+2)(3\lambda+\mu)\right]}{96(\lambda+\mu)^4(3\lambda+\mu)}~.\nonumber
\end{eqnarray}
From Lemma \ref{L-C2-C3}, we have
\begin{eqnarray}
    2c_2 & =& c_1^2+(4-c_1^2)x,  \qquad    2d_2  = d_1^2+(4-d_1^2)y
\label{CP-SHD-e17a}
\end{eqnarray}
and
\begin{eqnarray}
    4c_3 & =& c_1^3+2(4-c_1^2)c_1x-(4-c_1^2)c_1x^2+2(4-c_1^2)(1-|x|^2)z \nonumber\\
    4d_3 & =& d_1^3+2(4-d_1^2)d_1x-(4-d_1^2)d_1x^2+2(4-d_1^2)(1-|y|^2)w
\label{CP-SHD-e17b}
\end{eqnarray}
for some $x,\, y,\, z,\, w$ with $|x|\leq 1,$ $|y|\leq 1,$ $|z|\leq 1$ and $|w|\leq 1.$ Also, from \eqref{SHD-th-p-e11}, \eqref{CP-SHD-e17a} and \eqref{CP-SHD-e17b}, we obtain
\begin{eqnarray}
c_2 - d_2 = \frac{4-c_1^2}{2}(x-y), \qquad c_2 + d_2 = c_1^2 + \frac{4-c_1^2}{2}(x+y)
\label{c2=d2}
\end{eqnarray}
and
\begin{eqnarray}
c_3 - d_3 &=& \frac{c_1^2}{2} +\frac{(4-c_1^2)c_1}{2}(x+y)-  \frac{(4-c_1^2)c_1}{4}(x^2+y^2)\nonumber\\
            && \qquad +\frac{4-c_1^2}{2}[(1-|x|^2)z-(1-|y|^2)w].
\label{c3-d3}
\end{eqnarray}
According to Lemma \ref{L-C2-C3}, we may assume without any restriction
that $c\in[0,2]$, where $c_1=c.$ Using (\ref{c2=d2}) and (\ref{c3-d3}) in \eqref{a2a4-a3}, by taking
$|x|=\gamma_1$, $ |y|=\gamma_2$, we can easily obtain that,
\[
|a_2a_4-a_3^2| \leq S_1+S_2(\gamma_1+\gamma_2)+S_3(\gamma_1^2+\gamma_2^2)+S_4(\gamma_1+\gamma_2)^2=F(\gamma_1,\gamma_2),
\]
where
\begin{eqnarray*}
S_1 = S_1(c, ~t) & =&
\frac{U_1(t)|6U_3(t)(\lambda+\mu)^3-U_1^3(t)(\mu^2+3\mu+2)(3\lambda+\mu)|c^4}
                                {96(\lambda+\mu)^4(3\lambda+\mu)}\nonumber\\
                      &&\qquad +\frac{U_1^2(t)c(4-c^2)}{8(\lambda+\mu)(3\lambda+\mu)}\geq 0\\
S_2 = S_2(c, ~t) & =& \frac{U_1^3(t)c^2(4-c^2)}{64(\lambda+\mu)^2(2\lambda+\mu)}
                        +\frac{U_1(t)U_2(t)(4-c^2)c^2}{16(\lambda+\mu)(3\lambda+\mu)} \geq 0 \\
S_3 = S_3(c, ~t) & =& \frac{U_1^2(t)c(c-2)(4-c^2)}{32(\lambda+\mu)(3\lambda+\mu)}
                        \leq 0 \\
S_4 = S_4(c, ~t) & =& \frac{U_1^2(t)(4-c^2)^2}{64(2\lambda+\mu)^2} \geq 0 \qquad (\frac{1}{2} < t < 1, \, 0 \leq c \leq 2).
\end{eqnarray*}

Now we need to maximize $F(\gamma_1, \gamma_2)$ in the closed square
$$\mathbb{S}:=\{(\gamma_1,\gamma_2):\, 0\leq \gamma_1\leq 1,\, 0\leq \gamma_2 \leq 1\}.$$
\noindent Since $S_3<0$ and $S_3+2S_4>0$ for all $t \in \left(\frac{1}{2}, ~1\right)$ and $c\in (0,2),$ we conclude that
\[
F_{\gamma_1\gamma_1}F_{\gamma_2\gamma_2}-(F_{\gamma_1\gamma_2})^2 < 0 \qquad \hbox{for all} \qquad \gamma_1,\, \gamma_2 \in \mathbb{S}.
\]

Thus the function $F$ cannot have a local maximum in the
interior of the square $\mathbb{S}.$ Now, we investigate
the maximum of $F$ on the boundary of the square $\mathbb{S}.$

For $\gamma_1=0$ and $0\leq \gamma_2 \leq 1$ (similarly $\gamma_2=0$
and $0\leq \gamma_1\leq1$) we obtain
\[
F(0,\gamma_2)=G(\gamma_2)=S_1+S_2\gamma_2+(S_3+S_4)\gamma_2^2.
\]

(i) The case $S_3+S_4 \geq 0:$ In this case for $0 < \gamma_2 < 1,$
        any fixed $c$ with $0\leq c <2$ and for all $t$ with $\frac{1}{2} < t < 1,$
        it is clear that $G'(\gamma_2)=2(S_3+S_4)\gamma_2+S_2 >0,$ that is, $G(\gamma_2)$ is
        an increasing function. Hence, for fixed $c\in [0,2)$ and $t \in \left(\frac{1}{2}, ~1\right),$ the maximum of
        $G(\gamma_2)$ occurs at $\gamma_1=1$ and
        \[
            \max G(\gamma_2) = G(1) = S_1+S_2+S_3+S_4.
        \]

(ii) The case $S_3+S_4 < 0:$ Since $S_2+2(S_3+S_4)\geq 0$ for $0 < \gamma_2 <1,$
        any fixed $c$ with $0\leq c <2$ and for all $t$ with $\frac{1}{2} < t < 1,$
        it is clear that $S_2+2(S_3+S_4) < 2(S_3+S_4)\gamma_2+S_2<S_2$ and so $G'(\gamma_2)>0.$
        Hence for fixed $c\in [0,2)$ and $t \in \left(\frac{1}{2}, ~1\right),$
        the maximum of $G(\gamma_2)$ occurs at $\gamma_1=1.$

Also for $c=2$ we obtain
\begin{eqnarray}\label{F}
F(\gamma_1,\gamma_2)
&=& S_1 \left|\right._{c~=~2} \nonumber\\
& =& \frac{U_1(t)|6U_3(t)(\lambda+\mu)^3-(\mu^2+3\mu+2)U_1^3(t)(3\lambda+\mu)|}
                                {6(\lambda+\mu)^4(3\lambda+\mu)}~.
\end{eqnarray}
Taking into account the value (\ref{F}) and the cases $(i)$
and $(ii),$ for $0\leq \gamma_2 <1,$ any fixed $c$ with $0\leq c \leq 2,$
and for all $t$ with $\frac{1}{2} < t < 1,$
\[
\max G(\gamma_2) = G(1) = S_1+S_2+S_3+S_4.
\]

For $\gamma_1=1$ and $0\leq \gamma_2 \leq 1$ (similarly $\gamma_2=1$
and $0\leq \gamma_1 \leq 1$), we obtain
\[
F(1,\gamma_2)=H(\gamma_2)=(S_3+S_4)\gamma_2^2 + (S_2+2S_4)\gamma_2 + S_1 + S_2 + S_3 + S_4.
\]

Similarly, to the above cases of $S_3+S_4,$ we get that
\[
\max H(\gamma_2) = H(1) = S_1+2S_2+2S_3+4S_4.
\]

Since $G(1)\leq H(1)$ for $c \in [0,2]$ and $t \in \left(\frac{1}{2}, ~1\right),$ $\max F(\gamma_1,\gamma_2)=F(1,1)$
on the boundary of the square $\mathbb{S}.$ Thus the maximum of $F$ occurs
at $\gamma_1=1$ and $\gamma_2=1$ in the closed square $\mathbb{S}.$

Next,  let a function $K~:~[0,2] \rightarrow \mathbb{R}$ defined by
\begin{equation}\label{K}
K(c, ~t) = \max F(\gamma_1, \gamma_2) = F(1,1)= S_1+2S_2+2S_3+4S_4
\end{equation}
for fixed value of $t.$
Substituting the values of $S_1,$ $S_2,$ $S_3$ and $S_4$ in the function $K$
defined by (\ref{K}), yields
\begin{align*}
K(c, ~t) = \frac{U_1^2(t)}{(2\lambda+\mu)^2}+\frac{M_{1}c^4+12M_{2}c^2}
            {96(\lambda+\mu)^4(2\lambda+\mu)^2(3\lambda+\mu)},
\end{align*}
where
\begin{eqnarray*}\label{CP-SHD-e22}
M_{1} & =&
U_1(t)\left|6U_3(t)(\lambda+\mu)^3-(\mu^2+3\mu+2)(3\lambda+\mu)U_1^3(t)\right|(2\lambda+\mu)^2
\\ \nonumber && -3U_1(t)\left[U_1^2(t)(3\lambda+\mu)+4U_2(t)(\lambda+\mu)(2\lambda+\mu)\right](\lambda+\mu)^2(2\lambda+\mu)
\\ \nonumber && -6U_1^2(t)\lambda^2(\lambda+\mu)^3
\\
M_{2} & =& \left[U_1^3(t)(2\lambda+\mu)(3\lambda+\mu)+2(2U_2(t)+U_1(t))U_1(t)(\lambda+\mu)(2\lambda+\mu)^2
                    \right.\\ && \left.
                            -4U_1^2(t)(\lambda+\mu)^2(3\lambda+\mu)\right](\lambda+\mu)^2.
\end{eqnarray*}

Assume that $K(c, ~t)$ has a maximum value in an interior of $c\in [0,2],$  by elementary
calculation, we find that
\begin{align*}
K^{\prime}(c, ~t) = \frac{\left(M_{1}c^2+6M_{2}\right)c}{24(\lambda+\mu)^4(2\lambda+\mu)^2(3\lambda+\mu)}. \label{H2-G'}
\end{align*}

We will examine the sign of the function $K^{\prime}(c, ~t)$ depending on the different cases of
the signs of $M_{1}$ and $M_{2}$ as follows:
\begin{enumerate}
\item Let $M_{1}\geq 0$ and $M_{2}\geq 0,$ then
$K^{\prime}(c, ~t) \geq 0,$ so $K(c, ~t)$ is an increasing function. Therefore
\begin{eqnarray}
    \max\{K(c, ~t):c\in(0, ~2)\} &=& K(2^-, ~t)\nonumber \\
                             &=& \frac{U_1^2(t)}{(2\lambda+\mu)^2}+\frac{M_{1}+3M_{2}}
                                {6(\lambda+\mu)^4(2\lambda+\mu)^2(3\lambda+\mu)}.
\label{CP-SHD-e24a}
\end{eqnarray}
That is, $\max\{\max\{F(\gamma_1, ~\gamma_2) : 0 \leq \gamma_1, ~\gamma_2 \leq 1 \} ~:~ 0 < c < 2 \} = K(2^-, ~t).$
\item Let $M_{1}> 0$ and $M_{2}< 0,$ then
$c_0 = \sqrt{\frac{-6M_{2}}{M_{1}}}$ is a critical point of the function $K(c, ~t).$
We assume that, $c_0 \in (0, ~2),$ since $K^{\prime\prime}(c, ~t) > 0,$ $c_0$ is a local minimum point of the function
$K(c, ~t).$ That is the function $K(c, ~t)$ can not have a local maximum.
\item Let $M_{1}\leq 0$ and $M_{2}\leq 0,$ then
$K^{\prime}(c, ~t) \leq 0,$ so $K(c, ~t)$ is an decreasing function on the interval $(0, ~2).$ Therefore
\begin{eqnarray}
    \max\{K(c, ~t):c\in(0,2)\} &=& K(0^+, ~t) = 4S_4 =  \frac{U_1^2(t)}{(2\lambda+\mu)^2}.
\label{CP-SHD-e24b}
\end{eqnarray}
\item Let $M_{1}< 0$ and $M_{2}> 0,$ then $c_0$ is a critical point of the function
$K(c, ~t).$ We assume that $c_0 \in (0, ~2).$ Since $K^{\prime\prime}(c, ~t) < 0,$ $c_0$ is a local maximum point of the function $K(c, ~t)$
and  maximum value occurs at $c=c_0.$ Therefore
\begin{eqnarray}
    \max\{K(c, ~t):c\in(0, ~2)\} &=& K(c_0, ~t),
\label{CP-SHD-e24c}
\end{eqnarray}
where
\begin{eqnarray*}
K(c_0, ~t)   =  \frac{4t^2}{(2\lambda+\mu)^2}-\frac{3M_{2}^2}{8M_{1}(\lambda+\mu)^4(2\lambda+\mu)^2(3\lambda+\mu)}.
\end{eqnarray*}
\end{enumerate}
Thus, from \eqref{F} to \eqref{CP-SHD-e24c}, the proof of Theorem \ref{th-SHD-class} is completed.
\end{proof}

\begin{corollary}
\label{Cor1-SHD-class} Let $f\in \sigma$ of the form (\ref{Int-e1}) be in $\mathcal{B}_{\sigma }\left( \lambda ,t\right).$ Then
\[
|a_2a_4-a_3^2| \leq
        \left\{
             \begin{array}{ll}
                  K(2^-, ~t) &; M_{3}\geq0 ~\hbox{and}~ M_{4}\geq0 \\
                  max\left\{\frac{4t^2}{(2\lambda+1)^2}, ~K(2^-, ~t)\right\}
                            &; M_{3}>0 ~\hbox{and}~ M_{4}<0  \\
                  \frac{4t^2}{(2\lambda+1)^2}
                            &; M_{3}\leq0 ~\hbox{and}~ M_{4}\leq0\\
                  max \left\{K(c_0, ~t),~K(2^-, ~t)\right\}
                            &; M_{3}<0 ~\hbox{and} ~ M_{4}>0
              \end{array}
         \right.,
\]
where
\begin{eqnarray*}
    K(2^-, ~t) & =& \frac{4t^2}{(2\lambda+1)^2}
                   +\frac{M_{3}+3M_{4}}{6(\lambda+1)^4(2\lambda+1)^2(3\lambda+1)},
                   \label{CP-SHD-e2a}\\
    K(c_0, ~t) & =& \frac{4t^2}{(2\lambda+1)^2}
                   -\frac{3M_{4}^2}{8M_{3}(\lambda+1)^4(2\lambda+1)^2(3\lambda+1)},\qquad c_0 = \sqrt{\frac{-6M_{4}}{M_{3}}}
                   \label{CP-SHD-e2b}
\end{eqnarray*}
and
\begin{eqnarray*}
M_{3} & = & 16t^2 \left|3(2t^{2}-1)(\lambda+1)^3-6(3\lambda+1)t^2\right|(2\lambda+1)^2
\nonumber\\ && -24t\left[t^2(3\lambda+1)+(4t^2-1)(\lambda+1)(2\lambda+1)\right](\lambda+1)^2(2\lambda+1)
\nonumber\\ && -24t^2\lambda^2(\lambda+1)^3,
\label{CP-SHD-e3}\\
M_{4}&=& 8t\left[t^2(2\lambda+1)(3\lambda+1)
+(4t^2-1)(\lambda+1)(2\lambda+1)^2
\right. \nonumber\\ && \left.\qquad +t(2\lambda+1)^2(\lambda+1) - 2t(\lambda+1)^2(3\lambda+1)\right](\lambda+1)^2.
                            \label{CP-SHD-e4}
\end{eqnarray*}
\end{corollary}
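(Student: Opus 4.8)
The plan is to obtain Corollary~\ref{Cor1-SHD-class} as a direct specialization of Theorem~\ref{th-SHD-class}. Indeed, by Remark~1 the class $\mathcal{B}_{\sigma}(\lambda,t)$ coincides with $\mathcal{N}_{\sigma}^{1}(\lambda,t)$, that is, the subclass corresponding to $\mu=1$. Hence every $f\in\mathcal{B}_{\sigma}(\lambda,t)$ satisfies the hypotheses of Theorem~\ref{th-SHD-class} with $\mu=1$, and the bound for $|a_2a_4-a_3^2|$ is obtained simply by inserting $\mu=1$ everywhere in the conclusion of that theorem; no further analytic argument (no new use of Lemma~\ref{L-Repart-fun-p} or Lemma~\ref{L-C2-C3}) is required.

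First I would substitute $\mu=1$ into the auxiliary quantities $M_{1}(\lambda,\mu;t)$ and $M_{2}(\lambda,\mu;t)$. Using $\lambda+\mu=\lambda+1$, $2\lambda+\mu=2\lambda+1$, $3\lambda+\mu=3\lambda+1$ and $\mu^{2}+3\mu+2=6$, an elementary computation shows that $M_{1}(\lambda,1;t)$ reduces exactly to the expression denoted $M_{3}$ in the statement of the corollary, and $M_{2}(\lambda,1;t)$ reduces exactly to $M_{4}$. Next I would carry the same substitution, together with $U_{1}(t)=2t$ (so $U_{1}^{2}(t)=4t^{2}$), through the formulas for $K(2^{-},t)$ and $K(c_{0},t)$: the term $4t^{2}/(2\lambda+\mu)^{2}$ becomes $4t^{2}/(2\lambda+1)^{2}$, the denominator $6(\lambda+\mu)^{4}(2\lambda+\mu)^{2}(3\lambda+\mu)$ becomes $6(\lambda+1)^{4}(2\lambda+1)^{2}(3\lambda+1)$, the denominator $8M_{1}(\lambda+\mu)^{4}(2\lambda+\mu)^{2}(3\lambda+\mu)$ becomes $8M_{3}(\lambda+1)^{4}(2\lambda+1)^{2}(3\lambda+1)$, and the critical point $c_{0}=\sqrt{-6M_{2}/M_{1}}$ becomes $c_{0}=\sqrt{-6M_{4}/M_{3}}$. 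This produces precisely the displayed expressions for $K(2^{-},t)$ and $K(c_{0},t)$ in the corollary.

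Finally, the four-way case analysis in the proof of Theorem~\ref{th-SHD-class}, which splits according to the signs of $M_{1}$ and $M_{2}$, transfers verbatim with $M_{1},M_{2}$ replaced by $M_{3},M_{4}$, since $M_{3}=M_{1}(\lambda,1;t)$ and $M_{4}=M_{2}(\lambda,1;t)$; this yields exactly the piecewise estimate asserted in Corollary~\ref{Cor1-SHD-class}. The only point that needs checking is the routine algebraic identity that the $\mu=1$ specializations of $M_{1}$ and $M_{2}$ coincide with the displayed $M_{3}$ and $M_{4}$ — this is the ``main obstacle,'' and it is purely a bookkeeping computation with no analytic content.
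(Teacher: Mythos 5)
Your proposal is correct and is exactly the intended argument: the paper states this corollary as an immediate specialization of Theorem \ref{th-SHD-class} to $\mu=1$ (via Remark 1, $\mathcal{B}_{\sigma}(\lambda,t)=\mathcal{N}_{\sigma}^{1}(\lambda,t)$), and your substitution check that $M_{1}(\lambda,1;t)=M_{3}$, $M_{2}(\lambda,1;t)=M_{4}$ (using $\mu^{2}+3\mu+2=6$) together with the corresponding specializations of $K(2^{-},t)$, $K(c_{0},t)$ and $c_{0}$ is all that is needed.
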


\begin{corollary}
\label{Cor2-SHD-class} Let $f\in \sigma$ of the form (\ref{Int-e1}) be in $\mathcal{B}_{\sigma }^{\mu }\left( t\right).$ Then
\[
|a_2a_4-a_3^2| \leq
        \left\{
             \begin{array}{ll}
                  K(2^-, ~t) &; M_{5}\geq0 ~\hbox{and}~ M_{6}\geq0 \\
                  max\left\{\frac{4t^2}{(2+\mu)^2}, ~K(2^-, ~t)\right\}
                            &; M_{5}>0 ~\hbox{and}~ M_{6}<0  \\
                  \frac{4t^2}{(2+\mu)^2}
                            &; M_{5}\leq0 ~\hbox{and}~ M_{6}\leq0\\
                  max \left\{K(c_0, ~t),~K(2^-, ~t)\right\}
                            &; M_{5}<0 ~\hbox{and} ~ M_{6}>0
              \end{array}
         \right.,
\]
where
\begin{eqnarray*}
    K(2^-, ~t) & =& \frac{4t^2}{(2+\mu)^2}
                   +\frac{M_{5}+3M_{6}}{6(1+\mu)^4(2+\mu)^{2}(3+\mu)},
                   \label{CP-SHD-e2a}\\
    K(c_0, ~t) & =& \frac{4t^2}{(2+\mu)^2}
                   -\frac{3M_{6}^2}{8M_{5}(1+\mu)^4(2+\mu)^2(3+\mu)}, \qquad c_0 = \sqrt{\frac{-6M_{6}}{M_{5}}}
                   \label{CP-SHD-e2b}
\end{eqnarray*}
and
\begin{eqnarray*}
M_{5} &= & 16t^2 \left|3(2t^{2}-1)(1+\mu)^3-(\mu^2+3\mu+2)(3+\mu)t^2\right|(2+\mu)^2
\nonumber\\ && -24t\left[t^2(3+\mu)+(4t^2-1)(1+\mu)(2+\mu)\right](1+\mu)^2(2+\mu)
\nonumber\\ && -24t^2(1+\mu)^3,
\label{CP-SHD-e3}\\
M_{6}&=& 8t\left[t^2(2+\mu)(3+\mu)
+(4t^2-1)(1+\mu)(2+\mu)^2
\right. \nonumber\\ && \left.\qquad +t(2+\mu)^2(1+\mu) - 2t(1+\mu)^2(3+\mu)\right](1+\mu)^2.
                            \label{CP-SHD-e4}
\end{eqnarray*}
\end{corollary}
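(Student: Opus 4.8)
The plan is to obtain Corollary \ref{Cor2-SHD-class} as a direct specialization of Theorem \ref{th-SHD-class}, not as a new computation. The starting point is the remark recording that $\mathcal{N}_{\sigma}^{\mu}\left(1,t\right)=\mathcal{B}_{\sigma}^{\mu}\left(t\right)$: a function $f\in\sigma$ belongs to $\mathcal{B}_{\sigma}^{\mu}\left(t\right)$ precisely when it belongs to $\mathcal{N}_{\sigma}^{\mu}\left(\lambda,t\right)$ with $\lambda=1$. Hence every conclusion proved for $\mathcal{N}_{\sigma}^{\mu}\left(\lambda,t\right)$ transfers to $\mathcal{B}_{\sigma}^{\mu}\left(t\right)$ once we put $\lambda=1$, and the work reduces to bookkeeping the substitution.

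Concretely, I would set $\lambda=1$ throughout the statement of Theorem \ref{th-SHD-class}. The recurring linear forms collapse as $(2\lambda+\mu)\mapsto(2+\mu)$, $(\lambda+\mu)\mapsto(1+\mu)$, $(3\lambda+\mu)\mapsto(3+\mu)$, and the stray factor $\lambda^{2}$ in the last line of $M_{1}$ becomes $1$. Carrying out these replacements in the displayed formulas for $M_{1}(\lambda,\mu;t)$ and $M_{2}(\lambda,\mu;t)$ produces exactly the quantities $M_{5}$ and $M_{6}$ in the corollary; in particular the term $-24t^{2}\lambda^{2}(\lambda+\mu)^{3}$ becomes $-24t^{2}(1+\mu)^{3}$, matching the last line of $M_{5}$. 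Likewise the borderline constant $\tfrac{4t^{2}}{(2\lambda+\mu)^{2}}$ becomes $\tfrac{4t^{2}}{(2+\mu)^{2}}$, and the expressions $K(2^{-},t)$ and $K(c_{0},t)$ — together with the critical point $c_{0}=\sqrt{-6M_{2}/M_{1}}$ — turn into the asserted formulas with $M_{3},M_{4}$ replaced by $M_{5},M_{6}$.

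The only point that genuinely requires a line of verification — the "hard part", such as it is — is that the four-way case analysis on the signs of $M_{1},M_{2}$ survives the substitution verbatim. This is immediate: the proof of Theorem \ref{th-SHD-class} uses $\lambda\ge 1$ only through the positivity of the denominators $(\lambda+\mu)$, $(2\lambda+\mu)$, $(3\lambda+\mu)$ and of the auxiliary coefficients $S_{1},\dots,S_{4}$ (with $S_{3}\le 0$, $S_{3}<0$ and $S_{3}+2S_{4}>0$ for $c\in(0,2)$), and all of these remain valid at $\lambda=1$ for $\tfrac12<t<1$, $0\le c\le 2$. Consequently the maximum of $F(\gamma_{1},\gamma_{2})$ on the square $\mathbb{S}$ is still attained at $(1,1)$, the function $K(c,t)=S_{1}+2S_{2}+2S_{3}+4S_{4}$ still has derivative of the factored form $\bigl(M_{5}c^{2}+6M_{6}\bigr)c\big/\bigl(24(1+\mu)^{4}(2+\mu)^{2}(3+\mu)\bigr)$, and $K''$ at the critical point has sign opposite to $M_{6}$, so cases (1)--(4) reproduce line by line with $M_{1}\mapsto M_{5}$, $M_{2}\mapsto M_{6}$. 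Reading off the resulting piecewise bound for $|a_{2}a_{4}-a_{3}^{2}|$ gives precisely the statement of Corollary \ref{Cor2-SHD-class}. (One could instead rerun the entire argument from \eqref{SHD-th-p-e1} onward with $\lambda$ replaced by $1$, but that merely repeats the same calculation and is unnecessary.)
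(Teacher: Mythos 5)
Your proposal is correct and matches the paper's intent exactly: the corollary is obtained simply by putting $\lambda=1$ in Theorem \ref{th-SHD-class} (since $\mathcal{N}_{\sigma}^{\mu}(1,t)=\mathcal{B}_{\sigma}^{\mu}(t)$ and $\lambda=1$ satisfies the hypothesis $\lambda\geq 1$), whereupon $M_{1},M_{2}$ become $M_{5},M_{6}$ and the piecewise bound reads off verbatim. Your extra check that the case analysis survives the substitution is harmless but not even needed, as the theorem applies directly.
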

\begin{corollary}
\label{Cor4-SHD-class} Let $f\in \sigma$ of the form (\ref{Int-e1}) be in $\mathcal{B}_{\sigma }\left(
t\right).$ Then
\[
|a_2a_4-a_3^2| \leq \left\{
                      \begin{array}{ll}
                        t^2(1-t^2), & \hbox{$\frac{1}{2} < t \leq t_{0_{1}}$;} \\
                        \frac{t(260t^{4}+84t^{3}-139t^{2}-18t+9)}{8(18t^{3}+42t^{2}-17t-9)}, & \hbox{$t_{0_{1}} < t < 1$,}
                      \end{array}
                    \right.
\]
where, the value of $t_{0_{1}}$, which is approximately $t_{0_{1}}=0.603615$, is root of equation $M_{1}=0$ for $\lambda=\mu$ and $\frac{1}{2}<t<1.$
\end{corollary}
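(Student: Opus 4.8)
The plan is to read off Corollary~\ref{Cor4-SHD-class} from Theorem~\ref{th-SHD-class} by specialising to $\lambda=\mu=1$, which by Remark 3 is precisely the class $\mathcal{B}_{\sigma}(t)$. With these values one has $\lambda+\mu=2$, $2\lambda+\mu=3$, $3\lambda+\mu=4$, $\lambda^{2}=1$, $\mu^{2}+3\mu+2=6$, so $4t^{2}/(2\lambda+\mu)^{2}=4t^{2}/9$. First I would substitute into the expressions for $M_{1}$ and $M_{2}$ of Theorem~\ref{th-SHD-class}; here the hypothesis $t\in(1/2,1)$ removes the absolute value via $|t^{2}-1|=1-t^{2}$, and a direct computation collapses them to
\[
M_{1}=-192\,t\bigl(18t^{3}+42t^{2}-17t-9\bigr),\qquad M_{2}=64\,t\bigl(42t^{2}-7t-9\bigr).
\]

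Next I would evaluate the two candidate bounds of Theorem~\ref{th-SHD-class}. Using $M_{1}+3M_{2}=384\,t^{2}(5-9t^{2})$ one gets $K(2^{-},t)=\frac{4t^{2}}{9}+\frac{M_{1}+3M_{2}}{3456}=t^{2}(1-t^{2})$, and inserting $M_{1},M_{2}$ into the formula for $K(c_{0},t)$ and clearing denominators gives
\[
K(c_{0},t)=\frac{t\bigl(260t^{4}+84t^{3}-139t^{2}-18t+9\bigr)}{8\bigl(18t^{3}+42t^{2}-17t-9\bigr)}.
\]
To decide which branch of the theorem applies I would locate the sign changes of $M_{1}$ and $M_{2}$ on $(1/2,1)$. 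The cubic $18t^{3}+42t^{2}-17t-9$ is increasing on $(1/2,1)$ (its derivative $54t^{2}+84t-17$ is positive there), negative at $t=1/2$ and positive at $t=1$, hence has a single root $t_{0_{1}}\approx0.603615$; so $M_{1}>0$ on $(1/2,t_{0_{1}})$ and $M_{1}<0$ on $(t_{0_{1}},1)$. The relevant root of $42t^{2}-7t-9$ is $\tau=\frac{7+\sqrt{1561}}{84}\approx0.5537<t_{0_{1}}$, so $M_{2}<0$ on $(1/2,\tau)$ and $M_{2}>0$ on $(\tau,1)$.

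Finally I would read off Theorem~\ref{th-SHD-class} on the three resulting subintervals. On $(1/2,\tau)$ we are in the case $M_{1}>0,\ M_{2}<0$, where the bound is $\max\{4t^{2}/9,\,K(2^{-},t)\}$; since $t^{2}(1-t^{2})\ge 4t^{2}/9$ is equivalent to $t\le\sqrt5/3$ and $\tau<\sqrt5/3$, this equals $t^{2}(1-t^{2})$. On $(\tau,t_{0_{1}})$ we are in the case $M_{1}>0,\ M_{2}>0$, whose bound is $K(2^{-},t)=t^{2}(1-t^{2})$ directly. On $(t_{0_{1}},1)$ we are in the case $M_{1}<0,\ M_{2}>0$, with bound $\max\{K(c_{0},t),K(2^{-},t)\}$; since $M_{1}<0$ forces $K(c_{0},t)=\frac{4t^{2}}{9}-\frac{3M_{2}^{2}}{4608\,M_{1}}>\frac{4t^{2}}{9}$, only the comparison with $K(2^{-},t)$ remains, and I expect this to be the main obstacle. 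Combining over the common denominator, the point is to recognise the perfect-square factorisation of the degree-six numerator, namely
\[
K(c_{0},t)-t^{2}(1-t^{2})=\frac{t\,\bigl(12t^{3}+14t^{2}-9t-3\bigr)^{2}}{8\bigl(18t^{3}+42t^{2}-17t-9\bigr)};
\]
on $(t_{0_{1}},1)$ the denominator is positive, so $K(c_{0},t)\ge t^{2}(1-t^{2})$ and the maximum is $K(c_{0},t)$. Assembling the three subintervals then yields the asserted piecewise estimate, completing the proof.
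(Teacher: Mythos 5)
Your proposal is correct and follows exactly the route the paper intends: the corollary is the specialization $\lambda=\mu=1$ of Theorem \ref{th-SHD-class}, and your values $M_{1}=-192t(18t^{3}+42t^{2}-17t-9)$, $M_{2}=64t(42t^{2}-7t-9)$, the resulting $K(2^{-},t)=t^{2}(1-t^{2})$ and $K(c_{0},t)$, and the sign analysis on $(\tfrac12,1)$ all check out. Your explicit perfect-square identity $K(c_{0},t)-t^{2}(1-t^{2})=\frac{t\,(12t^{3}+14t^{2}-9t-3)^{2}}{8(18t^{3}+42t^{2}-17t-9)}$ cleanly settles the comparison on $(t_{0_{1}},1)$ that the paper leaves implicit.
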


\begin{corollary}
\label{Cor3-SHD-class} Let $f\in \sigma$ of the form (\ref{Int-e1}) be in $\mathcal{S}_{\sigma }^{\ast
}\left( t\right).$ Then
\[
|a_2a_4-a_3^2| \leq \left\{
                      \begin{array}{ll}
                        \frac{8t^{2}}{3}, & \hbox{$\frac{1}{2} < t \leq \frac{7+\sqrt{401}}{44}$;} \\
                        t^{2}+\frac{t(2+t-11t^{2})^{2}}{3(22t^{2}-7t-4)}, & \hbox{$\frac{7+\sqrt{401}}{44}<t<1$.}
                      \end{array}
                    \right.
.
\]
\end{corollary}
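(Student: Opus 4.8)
The plan is to specialize Theorem~\ref{th-SHD-class} to $\lambda=1$ and $\mu=0$, since $\mathcal{S}_{\sigma}^{\ast}(t)=\mathcal{N}_{\sigma}^{0}(1,t)$. First I would substitute $\lambda+\mu=1$, $2\lambda+\mu=2$, $3\lambda+\mu=3$, $\mu^{2}+3\mu+2=2$ and $\lambda^{2}=1$ into the expressions for $M_{1}(\lambda,\mu;t)$ and $M_{2}(\lambda,\mu;t)$ given in Theorem~\ref{th-SHD-class}. Noting that $\left|3(2t^{2}-1)-6t^{2}\right|=3$, a short computation gives
\[
M_{1}=-24t\,(22t^{2}-7t-4),\qquad M_{2}=8t\,(22t^{2}-2t-4)=16t\,(11t^{2}-t-2).
\]

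Next I would settle the signs of $M_{1}$ and $M_{2}$ on $\left(\tfrac12,1\right)$. The positive root of $22t^{2}-2t-4$ is $\tfrac{1+\sqrt{89}}{22}<\tfrac12$, so $M_{2}>0$ throughout $\left(\tfrac12,1\right)$; the positive root of $22t^{2}-7t-4$ is $t_{0_{1}}:=\tfrac{7+\sqrt{401}}{44}\in\left(\tfrac12,1\right)$, so $M_{1}\geq0$ on $\left(\tfrac12,t_{0_{1}}\right]$ and $M_{1}<0$ on $\left(t_{0_{1}},1\right)$. Hence only two of the four cases of Theorem~\ref{th-SHD-class} occur: the case $M_{1}\geq0$, $M_{2}\geq0$ (on $\left(\tfrac12,t_{0_{1}}\right]$), where the bound is $K(2^{-},t)$, and the case $M_{1}<0$, $M_{2}>0$ (on $\left(t_{0_{1}},1\right)$), where the bound is $\max\{K(c_0,t),K(2^{-},t)\}$.

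For the first interval, substituting the specialized parameters into $K(2^{-},t)$ and using $M_{1}+3M_{2}=24t\bigl[(22t^{2}-2t-4)-(22t^{2}-7t-4)\bigr]=120t^{2}$ together with $6(\lambda+\mu)^{4}(2\lambda+\mu)^{2}(3\lambda+\mu)=72$, one gets $K(2^{-},t)=t^{2}+\tfrac{120t^{2}}{72}=\tfrac{8t^{2}}{3}$, the first branch of the claimed estimate. For the second interval, the same substitution in $K(c_0,t)$ yields $K(c_0,t)=t^{2}+\dfrac{t\,(2+t-11t^{2})^{2}}{3(22t^{2}-7t-4)}$, where $22t^{2}-7t-4>0$. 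It remains to show that $K(c_0,t)\geq K(2^{-},t)$ on $\left(t_{0_{1}},1\right)$; the crucial observation is the identity
\[
(2+t-11t^{2})^{2}-5t\,(22t^{2}-7t-4)=(11t^{2}-6t-2)^{2},
\]
which gives $K(c_0,t)-\tfrac{8t^{2}}{3}=\dfrac{t\,(11t^{2}-6t-2)^{2}}{3(22t^{2}-7t-4)}\geq0$. Therefore $\max\{K(c_0,t),K(2^{-},t)\}=K(c_0,t)$ on $\left(t_{0_{1}},1\right)$, which is the second branch, and combining the two intervals finishes the proof.

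Since the bound furnished by Theorem~\ref{th-SHD-class} is already in closed form, the work here is purely computational; the ``hard part'' is only to keep the algebra of the specialization of $M_{1},M_{2}$ straight and to recognize the perfect square $(11t^{2}-6t-2)^{2}$, after which the case comparison is immediate. (As a consistency remark, $c_0=\sqrt{4(11t^{2}-t-2)/(22t^{2}-7t-4)}$ in fact exceeds $2$ on $\left(t_{0_{1}},\tfrac{3+\sqrt{31}}{11}\right]$, where $K(\,\cdot\,,t)$ is increasing on $[0,2]$ and the genuine maximum is $\tfrac{8t^{2}}{3}\le K(c_0,t)$; so the stated estimate remains a valid upper bound on all of $\left(t_{0_{1}},1\right)$.)
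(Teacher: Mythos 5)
Your proposal is correct and follows exactly the route the paper intends: the corollary is stated without a separate proof as the specialization $\lambda=1$, $\mu=0$ of Theorem \ref{th-SHD-class}, and your computations $M_{1}=-24t(22t^{2}-7t-4)$, $M_{2}=8t(22t^{2}-2t-4)$, the sign analysis at $t_{0}=\tfrac{7+\sqrt{401}}{44}$, and the resulting values of $K(2^{-},t)$ and $K(c_{0},t)$ all check out. Your additional verifications --- the perfect-square identity giving $K(c_{0},t)-\tfrac{8t^{2}}{3}=\tfrac{t(11t^{2}-6t-2)^{2}}{3(22t^{2}-7t-4)}\geq 0$, which justifies dropping the $\max$, and the remark that the bound remains valid even where $c_{0}>2$ --- go beyond what the paper records and only strengthen the argument.
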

\begin{remark}
For specializing the parameters involving in Theorem \ref{th-SHD-class}, the results discussed are improve the results of Mustafa \cite{Mustafa}.
\end{remark}
\
\\
\
\textbf{Acknowledgment~:~} 
We record our sincere thanks to the referees for their insightful
suggestions to improve the results as well as the present form of the article.


\end{document}